\newcommand{\be}{\begin{equation}}
\newcommand{\ee}{\end{equation}}
\newcommand{\beq}{\begin{eqnarray}}
\newcommand{\eeq}{\end{eqnarray}}
\newtheorem{thm}{Theorem}[section]
\newtheorem{lma}{Lemma}[section]
\newtheorem{prop}{Proposition}[section]
\newtheorem*{ack*}{Acknowledgements}
\newtheorem{claim}{Claim}[section]
\theoremstyle{remark}
\newtheorem{rem}{Remark}[section]
\numberwithin{equation}{section}
\def\be{\begin{equation}}
\def\ee{\end{equation}}
\def\bee{\begin{equation*}}
\def\eee{\end{equation*}}
\def\lf{\left}
\def\ri{\right}
\newcommand{\Ric}{\mathrm{Ric}}
\def\Ric{\text{\rm Ric}}
\def\Rm{\text{\rm Rm}}
\def\p{\partial}
\def\heat{\lf(\frac{\p}{\p t}-\Delta_{g(t)}\ri)}
\def\e{\varepsilon}
\def\a{{\alpha}}
\def\R{\mathbb{R}}
\begin{document}

\title[]
{Scalar curvature lower bound under integral convergence}

\author{Yiqi Huang}
\address[Yiqi Huang]{Department of Mathematics, MIT, 77 Massachusetts Avenue, Cambridge, MA 02139-4307, USA}
 \email{yiqih777@mit.edu}

 \author{Man-Chun Lee}
\address[Man-Chun Lee]{Department of Mathematics, The Chinese University of Hong Kong, Shatin, Hong Kong, China}
\email{mclee@math.cuhk.edu.hk}

%\thanks{The author was partially supported by NSF grant DMS-1709894.}

\renewcommand{\subjclassname}{
  \textup{2010} Mathematics Subject Classification}
\subjclass[2010]{Primary 53C44
}

\date{\today}

\begin{abstract}
In this work, we consider sequences of $C^2$ metrics which converges to a $C^2$ metric in $C^0$ sense. We show that if the scalar curvature of the sequence is almost non-negative in the integral sense, then the limiting metric has scalar curvature lower bound in point-wise sense.
\end{abstract}

\maketitle

\markboth{Scalar curvature under integral convergence}{}

\section{Introduction}

In view of compactness geometry, it is important to understand notions of curvature lower bounds in spaces with low regularity.  It is natural to ask if $C^0$ structure is sufficient to define the scalar curvature lower bound as opposed to the theory of Ricci curvature and sectional curvature.  In \cite{Gromov2014}, Gromov considered the stability problems and proved 
\begin{thm}[Gromov]
Let $M$ be a (possibly open) smooth manifold and $\kappa$ is a continuous function. Consider a sequence of $C^2$ Riemannian metrics $g_i$ on $M$ that converges to a $C^2$ Riemannian metric $g_0$ in $C^0_{loc}$. Assume that for all $i$, the scalar curvature of $g_i$ satisfies $\mathrm{R}(g_i)\geq \kappa$ on M. Then $\mathrm{R}(g_0)\geq \kappa$ on $M$.
\end{thm}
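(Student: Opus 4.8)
The plan is to regularize the metrics by the Ricci--DeTurck flow and to use that lower bounds on \emph{scalar} curvature---unlike lower bounds on sectional or Ricci curvature---are preserved by the Ricci flow. This is meant to overcome the essential difficulty that $C^0$ convergence gives no control on curvature: the Hessians of the $g_i$ may oscillate arbitrarily even though $g_i\to g_0$ uniformly. The statement being pointwise, fix $p\in M$ and $\varepsilon>0$; it suffices to show $\mathrm{R}(g_0)(p)\geq\kappa(p)-\varepsilon$. By continuity of $\kappa$ there is a ball $B_{g_0}(p,\rho)\Subset M$ with $\kappa\geq\kappa(p)-\varepsilon=:m$ on it, so $\mathrm{R}(g_i)\geq m$ there. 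Since the problem is local and Simon's existence theorem for the Ricci--DeTurck flow imposes \emph{no} curvature hypothesis on the evolving metric---only $C^0$-closeness to a fixed smooth background---we may reduce, by extending $g_0$ to a closed manifold $\hat M$ and gluing $g_i$ to a fixed background outside $B_{g_0}(p,\rho)$ through a shell in which $g_i$ has first been mollified at a \emph{fixed} small scale, to the situation: $\hat M$ is closed, $\hat g_i\to\hat g_0$ in $C^0(\hat M)$, $\hat g_0\in C^2$ and $\hat g_0=g_0$ near $p$, $\mathrm{R}(\hat g_i)\geq m$ near $p$, and $\mathrm{R}(\hat g_i)\geq-\Lambda$ on $\hat M$ with $\Lambda$ independent of $i$---the last bound because a fixed-scale mollification of the uniformly bounded $g_i$ has $C^2$-norm bounded independently of $i$. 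A further mollification of each $\hat g_i$ at a scale $\to0$, which moves $\mathrm{R}(\hat g_i)$ by only $o(1)$ as $\hat g_i\in C^2$, lets us also assume every $\hat g_i$ is smooth, at the price of the weaker near-$p$ bound $\mathrm{R}(\hat g_i)\geq m-\varepsilon_i$ with $\varepsilon_i\to0$.

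Run the Ricci--DeTurck flow $\hat g_i(t)$, $t\in[0,\tau]$, from $\hat g_i$. By Simon's (and Koch--Lamm's) a priori estimates, $\tau$ and all constants are uniform in $i$: the $\hat g_i(t)$ are uniformly bilipschitz to the background, $|\Rm(\hat g_i(t))|\leq C/t$, and $\hat g_i(t)\to\hat g_i$ as $t\to0$. For $t>0$ the flow is a smooth Ricci flow up to diffeomorphism, and its scalar curvature obeys
\[
\partial_t\mathrm{R}(\hat g_i(t))\ \geq\ \Delta_{\hat g_i(t)}\mathrm{R}(\hat g_i(t))+\langle X_i,\nabla\mathrm{R}(\hat g_i(t))\rangle+\tfrac{2}{n}\mathrm{R}(\hat g_i(t))^2,
\]
$X_i$ being the DeTurck vector field, which is subcritical ($|X_i|\leq C/\sqrt t$). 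Let $v_i$ solve $\partial_t v_i=\Delta_{\hat g_i(t)}v_i+\langle X_i,\nabla v_i\rangle$ on $\hat M\times[0,\tau]$ with $v_i(\cdot,0)=\mathrm{R}(\hat g_i)$. Then $\mathrm{R}(\hat g_i(t))-v_i(t)$ is a bounded supersolution of this linear equation that vanishes at $t=0$, so the maximum principle on the closed $\hat M$ gives
\[
\mathrm{R}(\hat g_i(t))(p)\ \geq\ v_i(t)(p)\ =\ \int_{\hat M}H_i(p,t;q,0)\,\mathrm{R}(\hat g_i)(q)\,dV(q),
\]
$H_i$ the positive fundamental solution. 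As the $\hat g_i(t)$ are uniformly bilipschitz to the fixed background and the drift is subcritical, $H_i$ satisfies Gaussian (Aronson-type) upper bounds uniform in $i$; hence the heat kernel concentrates near $p$ as $t\to0$ uniformly in $i$, and the contribution of the region where $\mathrm{R}(\hat g_i)$ is uncontrolled is at most $(\Lambda+|m|)\Psi(t)$ with $\Psi(t)\to0$ independently of $i$. With $\mathrm{R}(\hat g_i)\geq m-\varepsilon_i$ near $p$ this yields $\mathrm{R}(\hat g_i(t))(p)\geq m-\varepsilon_i-(\Lambda+|m|)\Psi(t)$.

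By continuous dependence of the flow on $C^0$ initial data and parabolic smoothing, $\hat g_i(t)\to\hat g_0(t)$ in $C^\infty_{\mathrm{loc}}$ for each $t\in(0,\tau]$; letting $i\to\infty$ (so $\varepsilon_i\to0$) gives $\mathrm{R}(\hat g_0(t))(p)\geq m-(\Lambda+|m|)\Psi(t)$. Since $\hat g_0\in C^2$ and $\hat g_0=g_0$ near $p$, one has $\hat g_0(t)\to\hat g_0$ in $C^2_{\mathrm{loc}}$ as $t\to0^+$, so $\mathrm{R}(\hat g_0(t))(p)\to\mathrm{R}(g_0)(p)$; letting $t\to0$ and then $\varepsilon\to0$ gives $\mathrm{R}(g_0)(p)\geq\kappa(p)$, as desired.

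The conceptual core is the one-sided preservation above: a scalar lower bound---in contrast to a Ricci or sectional lower bound---survives the Ricci flow thanks to the favorable term $\tfrac2n\mathrm{R}^2$ in the evolution of $\mathrm{R}$. The main work lies in running this on genuinely rough data: the \emph{uniform} short-time existence and $C^0$-stability of the Ricci--DeTurck flow; the validity of the maximum principle on a non-compact base, which is why we compactify---legitimate precisely because the existence theorem ignores the curvature of the perturbation---and the control of the uncontrolled-curvature gluing shell, which is absorbed by the Gaussian weight of $H_i$, uniformly in $i$ because of the fixed-scale mollification there. That $\kappa$ is merely continuous, not constant, is taken care of by the initial localization.
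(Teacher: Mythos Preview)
Your overall strategy---regularize by the Ricci--DeTurck flow, pass to the limit in $i$ for $t>0$, then send $t\to 0$---is exactly the one the paper attributes to Bamler and then refines. The paper does not prove Gromov's theorem separately; it proves the strictly stronger Theorem~\ref{Main-Theorem-1}, whose proof specializes to this case. The essential difference between your argument and the paper's lies in how the localization is achieved, and this is precisely where your argument has a gap.

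The problematic step is the claim that one can glue so that $\mathrm{R}(\hat g_i)\geq -\Lambda$ on all of $\hat M$ with $\Lambda$ independent of $i$. Your construction keeps $\hat g_i=g_i$ near $p$ (so that $\mathrm{R}\geq m$ there) and uses a fixed-scale mollification $g_i^\eta$ in the gluing shell (so that $C^2$ bounds there are uniform). But these two regions must be joined: any cutoff interpolating between $g_i$ and $g_i^\eta$ produces, in the transition annulus, a metric whose second derivatives involve $D^2 g_i$, which is \emph{not} uniformly bounded since you only have $g_i\to g_0$ in $C^0$. A fixed-scale mollification does not preserve the lower bound $\mathrm{R}(g_i)\geq m$ up to a uniform error either, because $\mathrm{R}(g_i^\eta)-\mathrm{R}(g_i)$ involves $(g_i^\eta-g_i)\cdot D^2 g_i^\eta$ and $|Dg_i^\eta|^2-|Dg_i|^2$, neither of which is controlled uniformly in $i$. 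So you cannot get both $\mathrm{R}(\hat g_i)\geq m-\varepsilon_i$ near $p$ and $\mathrm{R}(\hat g_i)\geq -\Lambda$ globally with uniform $\Lambda$. Without that, your heat-kernel representation yields $\mathrm{R}(\hat g_i(t))(p)\geq m-(\Lambda_i+|m|)\Psi(t)$ with $\Lambda_i$ possibly unbounded, and you cannot let $i\to\infty$ before $t\to 0$.

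The paper sidesteps this entirely. After extending to $\mathbb{R}^n$ by a cutoff (Lemma~\ref{Lemma}), it makes no attempt to control $\mathrm{R}(\tilde g_{i,0})$ in the gluing region. Instead, Proposition~\ref{Main-Estimate} gives a \emph{genuinely local} $L^{n/2}$ estimate: the energy $E(t)=\int (\mathrm{R}-\sigma)_-^{n/2}\Phi\,d\mu_t$ with a Perelman-type cutoff $\Phi$ supported where $\tilde g_{i,0}=g_i$ obeys $E'(t)\leq C t^{-1/2}(E(t)+1)$, using only $|\mathrm{Rm}|\leq A/t$ and the volume bound. The key cancellation is that the dangerous term $\mathrm{R}\,\varphi^{n/2}$ from $\heat\varphi^{n/2}$ exactly meets the $-\mathrm{R}$ from $\partial_t d\mu_t$; no information about $\mathrm{R}$ outside $\mathrm{supp}\,\Phi$ is ever used. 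Since $E(0)=0$ in the Gromov setting, one gets $E(t)\leq C\sqrt t$ uniformly in $i$, passes to the limit, and lets $t\to 0$. If you want to repair your pointwise approach, you would need either such a localized maximum-principle argument (using the $\tfrac{2}{n}\mathrm{R}^2$ term together with the uniform $|\mathrm{Rm}|\leq A/t$ bound to replace the missing $\Lambda$), or to observe that the paper's $L^{n/2}$ estimate already does the job.
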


In \cite{Bamler2016}, Bamler gave an alternative proof of Gromov's result using Ricci flow and a uniqueness result of Koch and Lamm \cite{KochLamm2012}. In \cite{Burkhardt2019}, Burkhardt-Guim generalized Bamler's result to the case when $g_0$ is only $C^0$. In this case, She is able to give an alternative definition (the $\beta$-weak sense) of lower bound on scalar curvature of $C^0$ metric on closed manifolds. The preservation of scalar curvature's lower bound had played a important role in this works. More recently in \cite{JiangShengZhang}, Jiang, Sheng and Zhang consider the scalar curvature lower bounds in distributional sense for singular metrics in $W^{1,p}$ for some $n < p \leq \infty$. In particular, they prove that the weak notion of scalar curvature lower bound is preserved along Ricci flow. 

Motivated by the above mentioned works, it is natural to ask if the point-wise scalar curvature lower bound can be weakened further to integral forms under $C^0$ convergence in contrast with the work in \cite{JiangShengZhang}. To this end, in this work we generalize the Theorem in two directions.  We first consider the case when the scalar curvature lower bound of $g_i$ is almost $\kappa$ in $L^{n/2}_{loc}$ sense.  We show that if a sequence of $C^2$ metrics $g_i$ converges in $C^0_{loc}$ to some $C^2$ metric $g_0$, then $g_0$ has scalar curvature bounded from below by $\kappa$.

\begin{thm}\label{Main-Theorem-1}
Let $M^n,n\geq 3$ be a (possibly open) Riemannian manifold and $\kappa: M \rightarrow \mathbb{R}$ is a continuous function. Consider a sequence of $C^2$ Riemannian metrics $g_{i,0}$ on $M$ that converges to a $C^2$ Riemannian metric $g_0$ in the local $C^0$-sense on $M$ as $i\to+\infty$. Assume that for any ball $B_{g_0}(x, r)$ with $r\leq 1$, 
\begin{equation}\label{assumption-1}
    \int_{B_{g_0}(x, r)} (\mathrm{R}(g_i)-\kappa)_{\_}^{\frac{n}{2}} d\mu_{g_0} \rightarrow 0.
\end{equation}
Then $\mathrm{R}(g_0) \geq \kappa$ everywhere on $M$. 
\end{thm}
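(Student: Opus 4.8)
The plan is to run the Ricci--DeTurck flow from each $g_{i,0}$ and exploit the parabolic regularization of the scalar curvature, in the spirit of Bamler's proof of Gromov's theorem \cite{Bamler2016}, but replacing the pointwise maximum principle by an estimate that only sees the negative part of $\mathrm{R}$ in $L^{n/2}$. The assertion being local, a standard localization of the Ricci--DeTurck flow (cf.\ \cite{Bamler2016}), in which the hypothesis \eqref{assumption-1} for \emph{every} small ball is used, reduces matters to the case where $M$ is closed; then, covering $M$ by finitely many balls of radius $\leq 1$, assumption \eqref{assumption-1} says $\|(\mathrm{R}(g_{i,0})-\kappa)_{\_}\|_{L^{n/2}(M,d\mu_{g_0})}\to 0$. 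Fix $p\in M$. It suffices to prove $\mathrm{R}(g_0)(p)\geq\kappa'(p)$ for an arbitrary smooth $\kappa'\leq\kappa$ on $M$, because $\sup\{\kappa'(p):\kappa'\in C^\infty(M),\ \kappa'\leq\kappa\}=\kappa(p)$; note $\|(\mathrm{R}(g_{i,0})-\kappa')_{\_}\|_{L^{n/2}(M)}\leq\|(\mathrm{R}(g_{i,0})-\kappa)_{\_}\|_{L^{n/2}(M)}\to 0$.

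By \cite{KochLamm2012} there is $T>0$ such that the Ricci--DeTurck flow $g_i(t)$ with $g_i(0)=g_{i,0}$ exists on $[0,T]$, is smooth for $t>0$, continuous up to $t=0$, satisfies $g_i(t)\to g_0(t)$ in $C^0(M\times[0,T])$ and in $C^\infty_{loc}(M\times(0,T])$ where $g_0(t)$ denotes the Ricci--DeTurck flow of $g_0$, and obeys the Koch--Lamm estimate $|X_i(t)|\lesssim t^{-1/2}$, where $X_i$ is the DeTurck vector field. Along the flow $\partial_t\mathrm{R}=\Delta_{g_i(t)}\mathrm{R}+2|\Ric|^2+\langle X_i,\nabla\mathrm{R}\rangle$. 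Put $w_i(t):=(\mathrm{R}(g_i(t))-\kappa')_{\_}\geq 0$; dropping $2|\Ric|^2\geq 0$ and using that the constant $0$ is a subsolution, $w_i$ is a weak subsolution of
\begin{equation*}
\partial_t w_i\leq\Delta_{g_i(t)}w_i+\langle X_i,\nabla w_i\rangle+h_i ,
\end{equation*}
where $h_i:=\big(-\Delta_{g_i(t)}\kappa'-\langle X_i,\nabla\kappa'\rangle\big)_+$ satisfies $\|h_i(t)\|_{L^\infty(M)}\lesssim 1+t^{-1/2}$ (here $\kappa'$ being smooth is used), and one has $\mathrm{R}(g_i(t))\geq\kappa'-w_i$ everywhere.

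Let $G_i(x,t;y,s)\geq 0$ be the heat kernel of $\partial_s-\Delta_{g_i(s)}-\langle X_i,\nabla\cdot\rangle$ on $M$. Since the $g_i(s)$ are uniformly equivalent to the fixed $C^2$ metric $g_0$ --- hence satisfy a uniform Sobolev/Nash inequality --- and the drift is of Kato type, $\int_0^\delta\|X_i(s)\|_{L^\infty}\,ds\lesssim\sqrt\delta\to 0$, the kernels $G_i$ satisfy uniform Gaussian upper bounds; in particular $\|G_i(p,t_0;\cdot,0)\|_{L^{n/(n-2)}(M)}\leq C t_0^{-1}$ and $\sup_{0\leq s<t_0}\int_M G_i(p,t_0;\cdot,s)\,d\mu\leq C$, uniformly in $i$. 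Parabolic comparison and Hölder's inequality for the dual pair $\big(\tfrac{n}{n-2},\tfrac n2\big)$, together with the bounds just recorded and $\|h_i(s)\|_{L^\infty}\lesssim 1+s^{-1/2}$, then give, for $0<t_0\leq T$,
\begin{equation*}
w_i(p,t_0)\leq C t_0^{-1}\,\|w_i(0)\|_{L^{n/2}(M)}+C\big(t_0+\sqrt{t_0}\,\big) .
\end{equation*}
Hence $\mathrm{R}(g_i(t_0))(p)\geq\kappa'(p)-C t_0^{-1}\|w_i(0)\|_{L^{n/2}(M)}-C\big(t_0+\sqrt{t_0}\,\big)$; letting $i\to\infty$, with $g_i(t_0)\to g_0(t_0)$ in $C^\infty_{loc}$ and $\|w_i(0)\|_{L^{n/2}(M)}\to 0$, yields $\mathrm{R}(g_0(t_0))(p)\geq\kappa'(p)-C\big(t_0+\sqrt{t_0}\,\big)$ for every small $t_0>0$.

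Finally, since $g_0$ is $C^2$, the Ricci--DeTurck flow $g_0(t)$ satisfies $\mathrm{R}(g_0(t))\to\mathrm{R}(g_0)$ locally uniformly as $t\to 0^+$ (a standard fact about the flow from $C^2$ initial data; cf.\ \cite{Bamler2016,Burkhardt2019}); letting $t_0\to 0$ gives $\mathrm{R}(g_0)(p)\geq\kappa'(p)$, and the supremum over $\kappa'$ gives $\mathrm{R}(g_0)(p)\geq\kappa(p)$. As $p$ was arbitrary, $\mathrm{R}(g_0)\geq\kappa$ on $M$. The main obstacle is the degeneracy of the parabolic theory at $t=0$: the flow metrics are only $C^0$ there and the DeTurck drift blows up like $t^{-1/2}$, so the heat-kernel Gaussian bounds used above --- equivalently, uniform Sobolev/Nash inequalities along the flow and a Kato-class treatment of the drift --- must be established in this low-regularity regime, for which the Koch--Lamm estimates are the key input; the localization for open $M$ is a further, more routine, technical point. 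One may instead bypass heat kernels and propagate the smallness of $\|w_i(t)\|_{L^{n/2}}$ along the flow by a Moser iteration, absorbing the supercritical term $\int w_i^{n/2+1}\lesssim\|w_i\|_{L^{n/2}}\int w_i^{n/2-2}|\nabla w_i|^2$ through the Sobolev inequality and the smallness of $\|w_i(0)\|_{L^{n/2}}$; here $n/2$ appears as the scale-invariant exponent for which $\int\mathrm{R}^{n/2}\,d\mu$ is dimensionless.
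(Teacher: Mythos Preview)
Your strategy is viable but takes a different route from the paper. The paper's proof rests on a purely integral estimate (Proposition~\ref{Main-Estimate}): along a Ricci flow with $|\Rm|\le At^{-1}$ one differentiates the localized energy $E(t)=\int (\mathrm R-\sigma)_-^{n/2}\Phi\,d\mu_t$, uses $\partial_t\mathrm R\ge\Delta\mathrm R+\tfrac2n\mathrm R^2$, absorbs the cross term by Cauchy--Schwarz, and controls the remaining error via $|\mathrm R|\le At^{-1}$ and a Perelman-type cutoff $\Phi$ to get $E(t)\le e^{C\sqrt t}E(0)+C\sqrt t$. No heat-kernel bound is needed; the contradiction then comes from comparing this with the positive lower bound forced by $\mathrm R_{g_0}(x_0)<\kappa(x_0)$. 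Your Duhamel/heat-kernel route is in fact what the paper uses for Theorem~\ref{Main-Theorem-2} (the weighted $L^1$ case), where the pointwise bound is extracted from the Bamler--Cabezas-Rivas--Wilking Gaussian estimate (Proposition~\ref{GreenFunctionEstimate}) on the \emph{Ricci flow} side. So the ``Moser/energy'' alternative you sketch at the end is essentially what the paper actually does for this theorem, while your main argument transplants the Section~4 technique to the $L^{n/2}$ setting.

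Two points need repair. First, the localization in \cite{Bamler2016} (and here) produces a complete metric on $\mathbb{R}^n$, not a closed manifold; your ``reduce to closed $M$'' step is not standard, and on $\mathbb{R}^n$ the global quantity $\|(\mathrm R(\tilde g_{i,0})-\kappa')_-\|_{L^{n/2}(\mathbb{R}^n)}$ can be infinite when $\kappa'(p)>0$ (since $\mathrm R=0$ on the Euclidean part). This is fixable by splitting the Duhamel integral into a ball around $p$ and its complement and using the Gaussian tail, but you should say so. Second, the Gaussian bound you invoke for the drifted kernel is, as you acknowledge, the crux; note that under the DeTurck diffeomorphism the operator $\partial_t-\Delta_{g_i(t)}-X_i\!\cdot\!\nabla$ pulls back to the \emph{drift-free} heat operator $\partial_t-\Delta_{\hat g_i(t)}$ along the Ricci flow, so one can invoke BCW-type bounds there rather than treating $|X_i|\sim t^{-1/2}$ as a Kato drift (the naive condition $\|X_i\|_\infty^2\in L^1_t$ fails). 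Either route works, but the paper sidesteps the issue entirely by staying at the $L^{n/2}$ level.
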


\begin{rem}
In view of the case where the negative part of scalar curvature drifts to infinity, the condition $\int_M (\mathrm{R}(g_{i,0})-\kappa)_{\_}^{\frac{n}{2}} d\mu_{g_i} \rightarrow 0$ is stronger than our assumption.
\end{rem}

We note that thanks to the $C^0$ convergence, it is flexible to interchange the measurement of volume and distance with respect to $g_{i,0}$ and $g_0$. The $L^{n/2}$ condition is in certain sense scaling invariant. 

If we weaken $L^{n/2}$ to $L^p$ with $p<n/2$, it is in general not strong enough to obtain estimates. To this end, we consider the case of some weighed $L^1$. We show that when $M$ is compact, a similar result is true.

\begin{thm}\label{Main-Theorem-2}
Let $M^n,n\geq 3$ be a compact Riemannian manifold with a $C^2$ metric $g_0$. Suppose there is a sequence of $C^2$ Riemannian metrics $g_{i,0}$ on $M$ that converges to  $g_0$ in the $C^0$-sense on $M$.  Suppose there is $\delta>0,\sigma\in \R$ such that for any $r\leq \mathrm{diam}(M,g_0)$ and $x\in M$,  
\begin{equation}\label{assumption-1}
    r^{(2-n-2\delta)}\int_{B_{g_0}(x, r)} \left( \mathrm{R}(g_{i,0})-\sigma\right)_{\_} d\mu_{g_0} \leq \e_i\rightarrow 0.
\end{equation}
Then $\mathrm{R}(g_0) \geq \sigma$ everywhere on $M$. 
\end{thm}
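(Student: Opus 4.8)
\medskip
\noindent\textbf{Proof proposal.}

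The plan is to run the Ricci--DeTurck flow from $g_{i,0}$ and from $g_0$, exactly as in the proof of Theorem~\ref{Main-Theorem-1} (following \cite{Bamler2016,KochLamm2012}), and to propagate the hypothesis~\eqref{assumption-1} --- which is a \emph{small} Morrey/Kato-type bound on $(\mathrm{R}(g_{i,0})-\sigma)_{\_}$ --- through the flow via a heat-kernel estimate. For $i$ large, $\|g_{i,0}-g_0\|_{C^0}$ lies below the Koch--Lamm threshold, so the flows $g_i(t)$ and $g_0(t)$ with fixed background $g_0$ exist on a common interval $[0,T]$, all $g_i(t)$ are uniformly equivalent to $g_0$ on $M\times[0,T]$, one has $g_i(t)\to g_0(t)$ in $C^0(M\times[0,T])$ and in $C^\infty_{\mathrm{loc}}(M\times(0,T])$, and, since $g_0\in C^2$, $\mathrm{R}(g_0(t))\to\mathrm{R}(g_0)$ uniformly as $t\to0^+$. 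It is therefore enough to show that for each fixed $t\in(0,T]$,
\[
\sup_M(\mathrm{R}(g_i(t))-\sigma)_{\_}\longrightarrow 0\qquad\text{as }i\to\infty;
\]
indeed, $C^\infty_{\mathrm{loc}}$-convergence then gives $\mathrm{R}(g_0(t))\geq\sigma$ on $M$ for all $t\in(0,T]$, and $\mathrm{R}(g_0)\geq\sigma$ follows on letting $t\to0^+$.

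Let $\bar g_i(t)$ be the genuine Ricci flow representative of the DeTurck flow $g_i(t)$; it is diffeomorphic to $g_i(t)$ --- hence has the same scalar curvature and the same $\sup_M(\mathrm{R}(\cdot)-\sigma)_{\_}$ --- is smooth on $(0,T]$, and is continuous up to $t=0$ with $\bar g_i(0)=g_{i,0}$. Along it $(\partial_t-\Delta_{\bar g_i(t)})(\mathrm{R}(\bar g_i(t))-\sigma)=2|\Ric(\bar g_i(t))|^2\geq 0$, so by Kato's inequality $\bar w_i:=(\mathrm{R}(\bar g_i(t))-\sigma)_{\_}$ is a weak subsolution of $\partial_t-\Delta_{\bar g_i(t)}$; since $M$ is closed, the maximum principle yields
\[
\bar w_i(x,t)\leq\int_M \bar K_i(x,t;y,0)\,\bar w_i(y,0)\,d\mu_{g_{i,0}}(y),
\]
where $\bar K_i\geq 0$ is the heat kernel of $\partial_t-\Delta_{\bar g_i(t)}$. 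Taking $r=\mathrm{diam}(M,g_0)$ in~\eqref{assumption-1} and using the uniform $C^0$-closeness of $g_{i,0}$ and $g_0$ gives the global bound $\int_M \bar w_i(y,0)\,d\mu_{g_{i,0}}(y)\leq C\e_i\to 0$. Hence, \emph{granted} a uniform on-diagonal estimate $\bar K_i(x,t;y,0)\leq C\,t^{-n/2}$ with $C$ independent of $i$, we conclude $\bar w_i(x,t)\leq C\e_i\,t^{-n/2}\to 0$ for each fixed $t$, as needed. (Since $M$ is closed, this on-diagonal bound already suffices; the Morrey exponent $n-2+2\delta$ would be used only --- through a dyadic decomposition against the Gaussian decay of $\bar K_i$ --- to obtain the sharper rate $\bar w_i(x,t)\lesssim\e_i\,t^{\delta-1}$, which we do not need here.)

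The main obstacle is the \emph{uniform} heat-kernel bound $\bar K_i(x,t;y,0)\leq C\,t^{-n/2}$; the reason for passing to $\bar g_i(t)$ is precisely to eliminate the DeTurck drift $W_i$, which near $t=0$ is only controlled by $|W_i(t)|\lesssim\|g_{i,0}-g_0\|_{C^0}\,t^{-1/2}$ --- a rate that is supercritical for the usual Moser iteration. The tool is the preservation of the Sobolev inequality along Ricci flow (Ye; Q.~Zhang): together with a Nash-type argument it gives $\bar K_i(x,t;y,0)\leq C\,t^{-n/2}$ with $C$ depending only on $n$, $T$, the Sobolev constant of the initial metric, and $\lambda_0$ of the initial metric, where $\lambda_0(g)$ is the bottom of the spectrum of $-4\Delta_g+\mathrm{R}(g)$. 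Both are uniform in $i$: the Sobolev constant of $g_{i,0}$ because $g_{i,0}$ is uniformly $C^0$-close to the fixed smooth $g_0$, and $\lambda_0(g_{i,0})\geq-(|\sigma|+1)$ because~\eqref{assumption-1} makes $V_i:=(\mathrm{R}(g_{i,0})-\sigma)_{\_}$ a Morrey function of norm $\leq\e_i$. Indeed, the standard form-boundedness estimate
\[
\int_M V f^2\,d\mu_{g_0}\leq C\Big(\sup_{x,\rho}\rho^{\,2-n-2\delta}\int_{B_{g_0}(x,\rho)}V\,d\mu_{g_0}\Big)\Big(\|\nabla f\|_{L^2(g_0)}^2+\|f\|_{L^2(g_0)}^2\Big),
\]
a consequence of the $d_{g_0}(x,y)^{2-n}$ decay of the Green function together with $\sum_k(2^{-k})^{2-n}(2^{-k})^{\,n-2+2\delta}<\infty$, shows that $V_i$ is relatively form-bounded with bound $\leq C\e_i\leq \frac12$ for $i$ large; writing $\mathrm{R}(g_{i,0})=(\mathrm{R}(g_{i,0})-\sigma)+\sigma$ then gives $\int_M(4|\nabla f|^2+\mathrm{R}(g_{i,0})f^2)\,d\mu_{g_0}\geq\|\nabla f\|_{L^2(g_0)}^2-(|\sigma|+1)\|f\|_{L^2(g_0)}^2$, i.e.\ $\lambda_0(g_{i,0})\geq-(|\sigma|+1)$.

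With the uniform bound in hand, $\sup_M(\mathrm{R}(g_i(t))-\sigma)_{\_}=\sup_M\bar w_i(\cdot,t)\leq C\e_i\,t^{-n/2}\to 0$ for each fixed $t\in(0,T]$, so $\mathrm{R}(g_0(t))\geq\sigma$ for $t\in(0,T]$ and $\mathrm{R}(g_0)\geq\sigma$ after $t\to0^+$. Apart from the uniform heat-kernel estimate above, every step is the by-now standard Ricci-flow machinery of \cite{Bamler2016} that underlies Theorem~\ref{Main-Theorem-1}; the one genuinely new point --- and where essentially all the work lies --- is the recognition that the weight $r^{2-n-2\delta}$ in~\eqref{assumption-1} is exactly what places $(\mathrm{R}(g_{i,0})-\sigma)_{\_}$ in the Kato class with small norm, so that both the spectral lower bound for $g_{i,0}$ and the propagated bound on $\bar w_i(\cdot,t)$ hold with constants tending to $0$.
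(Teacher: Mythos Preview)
Your route is genuinely different from the paper's. The paper does \emph{not} use the plain heat kernel of $\partial_t-\Delta$; instead it proves a pointwise decay estimate (Proposition~\ref{Improved-R-low}) by a continuity argument: assuming $\varphi:=(\mathrm R-\sigma)_-<\Lambda t^{-\alpha}$ up to the first failure time, it rewrites the evolution inequality as $(\partial_t-\Delta-\mathrm R)\tilde\varphi\le 0$ for a suitable $\tilde\varphi$, applies the Bamler--Cabezas-Rivas--Wilking Gaussian bound for the Green function of $\partial_t-\Delta-\mathrm R$ (valid under only $|\mathrm{Rm}|\le A/t$ and non-collapsing, both supplied by Lemma~\ref{Lemma}), and then performs the dyadic sum against the Morrey hypothesis to force $\alpha=1-\delta$ and $\Lambda\sim\varepsilon$. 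Thus in the paper the exponent $n-2+2\delta$ is \emph{exactly} what makes the dyadic sum converge and closes the bootstrap; it is not a side condition used only to control $\lambda_0$.

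Your argument hinges on a uniform on-diagonal bound $\bar K_i(x,t;y,0)\le C\,t^{-n/2}$ for the heat kernel of $\partial_t-\Delta_{\bar g_i(t)}$, which you attribute to Ye/Q.~Zhang via preservation of the Sobolev inequality and a Nash argument. This is where the proposal has a gap. What is preserved along Ricci flow is the \emph{$\mathrm R$-Sobolev} (equivalently log-Sobolev) inequality
\[
\Big(\int f^{\frac{2n}{n-2}}\,d\mu_t\Big)^{\frac{n-2}{n}}\le A\int\big(4|\nabla f|^2+\mathrm R\,f^2\big)\,d\mu_t+B\int f^2\,d\mu_t,
\]
with $A,B$ controlled by the initial Sobolev constant and $\lambda_0(g_{i,0})$; to run Nash/Davies for the operator $\partial_t-\Delta$ one must strip the $\mathrm R f^2$ term, i.e.\ one needs a lower bound on $\mathrm R(\bar g_i(t))$ \emph{along the flow}, which is precisely the unknown. (Equivalently: the heat-kernel bounds in that literature are for the conjugate-type operator $\partial_t-\Delta-\mathrm R$, or else assume $\mathrm R$ bounded below along the flow.) The paper avoids this circularity by working with $\partial_t-\Delta-\mathrm R$, for which the BCW estimate requires no spectral or scalar-curvature input, at the price of the bootstrap that absorbs the residual $\big(\frac{2-n}{n}\mathrm R+\frac{2}{n}\sigma\big)\varphi$ term. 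Your Kato/Morrey control of $\lambda_0(g_{i,0})$ is correct and interesting, but it does not by itself yield the uniform forward heat-kernel bound you need; if you want to keep your scheme, you would have to either (i) replace Ye/Zhang by an Aronson-type estimate coming from the uniform bi-Lipschitz control on the DeTurck side (which needs care because the volume form is time-dependent), or (ii) switch to the $\partial_t-\Delta-\mathrm R$ kernel, at which point you are back to the paper's bootstrap and the dyadic use of the Morrey exponent.
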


\begin{rem}
If the limit metric is only $C^0$, with the estimates in our proof and arguments in \cite{Burkhardt2019}, the scalar curvature of limit metric is greater than $\sigma$ in the $\beta$-weak sense.
\end{rem}

In particular, the curvature can be a-priori unbounded as $r\to 0$. The Theorem says that the $C^0$ convergence of metrics is sufficient to rule out the blow-up of scalar curvature if the rate is mild.

\begin{ack*}
The authors would like to thank Professor Richard Bamler for his interest in this note and helpful discussions.  The authors would like to thank the referee for useful comments.
\end{ack*}

\section{Ricci-Deturck flow and Ricci flow} \label{Pre-RF}

In this section, we will collect some useful information on Ricci-Deturck flows. Let $(M,h)$ be a complete Riemannian manifold. A $C^2$ family of metrics $\hat g(t)$ is said to be a Ricci-Deturck flow with background metric $h$ if it satisfies 
\begin{equation}
    \left\{
    \begin{array}{ll}
    \partial_t g_{ij}=-2R_{ij}+\nabla_i W_j +\nabla_j W_i;\\
    W^k=g^{pq}\left(\Gamma^k_{pq}-\Gamma^k(h)_{pq} \right);\\
    g(0)=g_0.
    \end{array}
    \right.
\end{equation}

In particular, the Ricci-Deturck flow is strictly parabolic. Moreover, it is equivalent to the Ricci flow in the following sense. Let $\Phi_t$ be a diffeomorphism given by 
\begin{equation}
    \left\{
    \begin{array}{ll}
    \partial_t \Phi_t(x)=-W\left(\Phi_t(x),t \right);\\
    \Phi_0(x)=x.
    \end{array}
    \right.
\end{equation}
Then $\hat g(t)=\Phi_t^*g(t)$ is a Ricci flow solution with $\hat g(0)=g(0)$.

The following result concerns the existence and uniqueness of the Ricci-Deturck flow on Euclidean spaces for metrics which are bilipschitz to Euclidean metric on $\mathbb{R}^n$. The following Lemma could be found in \cite[Lemma 2]{Bamler2016}, see also \cite{Simon2002,KochLamm2012} for the origin of this works.

\begin{lma}\label{Lemma}
There exist constants $\epsilon >0$, $A>0$ such that the following is true: Let $g_0$ be a $C^2$ metric on $\mathbb{R}^n$ that is $(1+\epsilon$)-bilipschitz to the standard Euclidean metric $g_{Eucl}$. Then there exists a solution $g(t),t\in [0,1]$ to the Ricci-DeTurck flow with $g(0)=g_0 $ such that following holds:
\begin{enumerate}
    \item[(i)] $g(t)$ is smooth for $t>0$ and $g(t)\to g_0$ in $C^2_{loc}$ as $t\to 0$;
    \item[(ii)] For any $t\geq 0$, the metric $g(t)$ is $(1.1)$-bilipschitz to $g_{Eucl}$. 
    \item[(iii)] For any $t>0$ and $k \leq 10$, we have 
    \begin{equation*}
        |D^k g(t)| < \frac{A}{t^{\frac{k}{2}}}
    \end{equation*}
    where $D$ denotes the Euclidean derivatives.
    \item[(iv)] If $g_{i}(t)$ and $g(t)$ satisfy the assumptions and if  $g_i(0)$ converges to $g(0)$ uniformly in the $C^0$-sense, then $g_i(t)$ converges to $g(t)$ uniformly in the $C^0$-sense on $\mathbb{R}^n \times [0, 1]$ and locally in the smooth sense on $\mathbb{R}^n \times (0, 1)$. 
\end{enumerate}
\end{lma}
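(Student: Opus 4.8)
The plan is to work in the framework of Koch–Lamm, treating the Ricci–DeTurck flow with Euclidean background as a quasilinear parabolic system for the perturbation $u := g - g_{Eucl}$, and to run a fixed-point argument in a scale-invariant space adapted to the parabolic scaling $u(x,t) \mapsto u(\lambda x, \lambda^2 t)$. Concretely, I would first rewrite the equation in the form $\partial_t u = \Delta u + \mathcal{B}(u)(\partial u, \partial u) + \mathcal{L}(u)(\partial^2 u)$, where $\mathcal{B}$ and $\mathcal{L}$ are smooth in $u$, with $\mathcal{L}(0) = 0$; the point of the DeTurck trick is exactly that the principal part is the ordinary heat operator. The smallness hypothesis that $g_0$ is $(1+\epsilon)$-bilipschitz to $g_{Eucl}$ gives $\|u(0)\|_{L^\infty}$ small, and I would set up the iteration in the Banach space $X$ of maps on $\mathbb{R}^n\times[0,1]$ with finite norm built from $\sup_t \|u(t)\|_{L^\infty}$ together with the Carleson-type quantity $\sup_{z,R} \big( R^{-n}\int_{P_R(z)} |\partial u|^2 \big)^{1/2}$ controlling one derivative in a parabolic-BMO fashion, plus the weighted sup-norms $\sup_t t^{k/2}\|\partial^k u(t)\|_{L^\infty}$ for the higher derivatives. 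The heat semigroup maps the nonlinearity back into $X$ with a gain, and for $\epsilon$ small the map is a contraction on a small ball; this yields a solution $g(t) = g_{Eucl}+u(t)$ on $[0,1]$, and the constant $A$ in (iii) is read off from the finiteness of the weighted-derivative part of the $X$-norm (I would only check $k\le 10$, which is all that is asked, using parabolic Schauder bootstrapping from the $k=1$ bound).

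For the qualitative properties: smoothness for $t>0$ and the convergence $g(t)\to g_0$ in $C^2_{loc}$ as $t\to 0$ follow from interior parabolic regularity applied to $u$ — since $g_0$ is $C^2$, the initial data is taken on continuously in $C^2_{loc}$, and standard parabolic estimates upgrade this. The bilipschitz bound (ii) comes from an $L^\infty$ maximum-principle-type estimate: $\|u(t)\|_{L^\infty}$ stays controlled by $\|u(0)\|_{L^\infty}$ up to a factor that is $\le 1.1/(1+\epsilon)$-compatible, because the zeroth-order terms in the evolution of $|u|^2$ (or of the eigenvalues of $g$ relative to $g_{Eucl}$) are manageable once $\epsilon$ is chosen small; here one uses that the solution is already known to exist with small derivatives, so the "bad" terms are genuinely lower order. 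For the continuous dependence statement (iv): given $g_i(0)\to g(0)$ in $C^0$, the differences $u_i - u$ solve a linear parabolic system with coefficients depending on $u_i, u$ (which are uniformly in the small ball of $X$) and with initial data going to $0$ in $L^\infty$; the same contraction/energy estimate that gave existence, now applied to the difference, gives $\sup_{\mathbb{R}^n\times[0,1]}|u_i-u|\to 0$, and then interior estimates on $\mathbb{R}^n\times(0,1)$ promote this to local smooth convergence away from $t=0$.

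The main obstacle, and the part I would spend the most care on, is the choice of function space and the verification that the nonlinearity is an honest contraction there: one needs a norm that is simultaneously (a) scale-invariant, so that the quadratic gradient term and the $\mathcal{L}(u)\partial^2 u$ term are controlled without derivative loss, (b) strong enough at $t\to 0$ to see that the $C^2$ initial datum is attained, and (c) weak enough that the heat semigroup estimates close. The Koch–Lamm space (an $L^\infty_t L^\infty_x$ part plus a parabolic Carleson/BMO-type control on $\sqrt{t}\,\partial u$) does exactly this, and the term $\mathcal{L}(u)\partial^2 u$ is handled because $\mathcal{L}(0)=0$ forces an extra factor of $\|u\|_{L^\infty}$, which is small by the bilipschitz hypothesis. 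The remaining technical point is the passage from these scale-invariant bounds to the clean pointwise derivative bounds $|D^k g(t)| < A t^{-k/2}$ for $k\le 10$, which is a routine but slightly lengthy parabolic bootstrap that I would only sketch, citing \cite{KochLamm2012} and \cite{Bamler2016} for the details.
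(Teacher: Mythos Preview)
Your proposal is correct and is essentially the Koch--Lamm argument that underlies the cited result; note that the paper does not give its own proof of this lemma but simply quotes it from \cite[Lemma~2]{Bamler2016} (with origins in \cite{Simon2002,KochLamm2012}), so there is nothing further to compare.
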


In particular, (iv) implies that if the limit is a-priori $C^2$, the limiting Ricci-Deturck flow obtained from $g_i(t)$ will coincide with the Ricci flow starting from $g_0$.  Hence, by studying the uniform properties of $g_i(t)$, it reveals the information of $g(t)$ and hence $g_0$ by letting $t\to0$. 
\bigskip

For the later purpose, we would also like to collect some useful information of Green function. We start with its definition. Given a complete Ricci flow $\hat g(t)$ on $M$. For $x,y\in M$ and $0\leq s<t\leq T$,  we use $G(x,t;y,s)$ to denote the heat kernel corresponding to the backwards heat equation coupled with the Ricci flow. Namely, 
\begin{equation}
(\partial_s +\Delta_{y,s}) \, G(x,t;y,s)=0,\quad\text{and}\quad \lim_{s\to t^-} G(x,t;y,s)=\delta_x(y)
\end{equation}
for any fixed $(x,t)\in M\times [0,T]$.  And for all $(y,s)\in M\times [0,T]$, we have 
\begin{equation}
(\partial_t -\Delta_{x,t}-\mathrm{R}_{g(t)})\, G(x,t;y,s)=0,\quad\text{and}\quad \lim_{t\to s^+} G(x,t;y,s)=\delta_y(x).
\end{equation}

In a recent work of Bamler, Cabezas-Rivas and Wilking \cite{BamlerCabezasWilking2019}, it was found that the Green function above with respect to the operator $\partial_t-\Delta_t-\mathrm{R}_t$ satisfies an estimate under a scaling invariant condition. 
\begin{prop}[Proposition 3.1 in \cite{BamlerCabezasWilking2019}]\label{GreenFunctionEstimate}
For any $n,A>0$, there is $C(n,A)$ such that the following holds: Let $(M,\hat g(t)), t\in [0,T]$ be a complete Ricci flow satisfying 
$$|\Rm(x,t)|\leq At^{-1},\quad\text{and}\quad \mathrm{Vol}_{\hat g(t)} \left(B_{\hat g(t)}(x,\sqrt{t}) \right)\geq A^{-1} t^{n/2}$$
for all $(x,t)\in M\times (0,T]$. Then 
$$G(x,t;y,s)\leq \frac{C}{t^{n/2}}\exp\left(-\frac{d_s^2(x,y)}{Ct} \right).$$
\end{prop}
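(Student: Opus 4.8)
The plan is to establish the bound in two stages — first the on-diagonal estimate $G(x,t;y,s)\le C(n,A)\,t^{-n/2}$, then the Gaussian refinement — both driven by the scale-invariant geometric control that the hypotheses force. By the parabolic rescaling $\hat g(\tau)\mapsto t^{-1}\hat g(t\tau)$ we may assume $t=1$; since the hypotheses are scale-invariant they persist, reading $|\Rm(\cdot,\tau)|\le A\tau^{-1}$ and $\Vol_{\hat g(\tau)}(B_{\hat g(\tau)}(\cdot,\sqrt\tau))\ge A^{-1}\tau^{n/2}$ on a time window containing $[0,1]$, and it suffices to bound $G(x,1;y,s)$ for $s\in[0,1)$.

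For the on-diagonal bound, the first point is that the Ricci lower bound $\Ric(\cdot,\tau)\ge-(n-1)A\tau^{-1}$ lets Bishop--Gromov comparison propagate the volume hypothesis at scale $\sqrt\tau$ down to all smaller scales, $\Vol_{\hat g(\tau)}(B_{\hat g(\tau)}(x,r))\ge c(n,A)r^n$ for $r\le\sqrt\tau$; combined with the Ricci lower bound (via volume doubling and a Buser--Poincaré inequality) this yields a uniform local Sobolev inequality at the parabolic scale — equivalently one may invoke the Sobolev inequality of Bamler and Zhang along Ricci flow, whose constant is governed by the $\nu$-entropy and hence by the non-collapsing just obtained. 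On the window $[1/2,1]$ the curvature is genuinely bounded, $|\Rm|\le 2A$, so Moser iteration from this Sobolev inequality (the Varopoulos/Davies--Grigor'yan machinery, in the form valid for time-dependent metrics) gives $G(x,1;z,1/2)\le C(n,A)$. To reach $s<1/2$ one iterates the reproduction identity $G(x,1;z,\tau/2)=\int_M G(x,1;z',\tau)\,G(z',\tau;z,\tau/2)\,d\mu_\tau(z')$ over the dyadic windows $[\tau/2,\tau]$, on each of which the time-slice metrics are uniformly equivalent and the mass conservation $\int_M G(z',\tau;z,\tau/2)\,d\mu_\tau(z')=1$ is standard; since $G\ge 0$, this propagates the on-diagonal bound from $\tau=1/2$ down to all $s\in(0,1/2]$ with no loss, giving $G(x,1;\cdot,s)\le C(n,A)$, i.e. the desired $C(n,A)\,t^{-n/2}$ after undoing the rescaling.

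For the Gaussian factor I would run the classical weighted-energy argument (the Davies--Gaffney estimate, or Grigor'yan's integrated maximum principle) for the conjugate heat equation: writing $\sigma=1-s$ and $u(\cdot,\sigma)=G(x,1;\cdot,1-\sigma)$, which solves the time-dependent heat equation $\partial_\sigma u=\Delta_{1-\sigma}u$, test against $e^{\psi}$ with $\psi$ modeled on $-d_s(x,\cdot)^2/(4\sigma)$ and control the quantity $\partial_\sigma\psi+|\nabla\psi|_{\hat g(1-\sigma)}^2$; here the bound $|\Ric(\cdot,\tau)|\le(n-1)A\tau^{-1}$ is exactly what dominates the distortion of the evolving distance functions within the relevant window. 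Feeding the resulting $L^2$ off-diagonal decay back into the reproduction identity (and splitting $M$ according to whether $d_\tau(x,z)$ or $d_\tau(z,y)$ exceeds half of $d_\tau(x,y)$) upgrades this to the pointwise Gaussian bound, and rescaling restores $t^{-n/2}$ and turns $d_s^2$ into $d_s^2/t$. (Alternatively, Perelman's reduced-distance comparison gives the same conclusion, since $G=(4\pi(t-s))^{-n/2}e^{-f}$ with $f\gtrsim d_s^2(x,y)/(t-s)$ under the curvature bound.)

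The main obstacle is precisely the degeneration of the curvature bound as $\tau\to 0$: no off-the-shelf bounded-geometry heat-kernel estimate applies on all of $[s,t]$, so every estimate has to be made at the natural parabolic scale — carried out on the unit window $[t/2,t]$ and then transported downward by the reproduction formula and mass conservation — and in the weighted estimate one must only compare the time-slice distances $d_\tau$ within sub-windows on which $|\Ric|$ is genuinely bounded, rather than freely interchanging $d_s$ with $d_t$. A bookkeeping remark: the argument in fact produces the sharper $G(x,t;y,s)\le C(t-s)^{-n/2}\exp(-d_s^2(x,y)/(C(t-s)))$, which specializes to the stated estimate on the range $s\le t/2$ that is used in the sequel.
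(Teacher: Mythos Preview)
The paper does not supply a proof of this proposition at all: it is quoted verbatim as Proposition~3.1 of Bamler--Cabezas-Rivas--Wilking \cite{BamlerCabezasWilking2019} and used as a black box in the proof of Proposition~\ref{Improved-R-low}. There is therefore nothing in the present paper to compare your sketch against.

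As an independent remark, your outline --- on-diagonal bound via a scale-matched Sobolev inequality and Moser iteration on the window $[t/2,t]$, propagation to earlier times by the reproduction formula, then Gaussian off-diagonal decay by a Davies--Gaffney weighted energy estimate --- is a recognisable and plausible route to this kind of heat-kernel bound. One point that would need care if you actually carried it out is the bookkeeping around the reproduction and mass identities: the kernel $G$ in this paper satisfies $(\partial_s+\Delta_{y,s})G=0$ in the backward variable and $(\partial_t-\Delta_{x,t}-\mathrm{R})G=0$ in the forward variable, which is \emph{not} the usual conjugate-heat-kernel pairing, so the correct semigroup identity and the precise normalisation $\int G\,d\mu=1$ (with respect to which variable and which volume form) have to be checked against this specific convention before the ``with no loss'' step goes through.
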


In view of localization,  it is sometimes slightly more convenient to use Ricci flow although the Ricci Deturck flows and Ricci flows are diffeomorphic to each other. We will interchange between them depending on the purpose.

\section{Local $L^{n/2}$-Estimates}

In this section, we will first prove a local persistence estimates of $L^{n/2}$ along the \textit{Ricci flows}. This will play an important role in the proof of Theorem~\ref{Main-Theorem-1}.

\begin{prop} \label{Main-Estimate} 
Suppose $(M^n,g(t)), n\geq 3$ is a complete Ricci flow satisfying 
$$|\Rm(x,t)|\leq At^{-1},\quad\text{and}\quad \mathrm{Vol}_{g(t)}\left(B_{g(t)}(x,1) \right)\leq A$$
for some $A>0$ on $M\times (0,T]$. For any $\sigma\in \R$, there is $C(n,A,\sigma),  S(n,A,\sigma)>0$ such that for all $t< \min\{S,T\}$, we have 
$$ \int_{B_{g(t)}\left(x_0,\frac14\right)} (\mathrm{R}_{g(t)}-\sigma)_{\_}^{\frac{n}{2}} \;d\mu_t \leq  e^{C_6\sqrt{t}}\cdot \int_{B_{g_0}\left(x_0,1\right)} (\mathrm{R}_{g_0}-\sigma)_{\_}^{\frac{n}{2}} \;d\mu_0+C_7 \sqrt{t}.$$

\end{prop}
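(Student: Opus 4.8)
The plan is to derive an evolution inequality for $u := (\mathrm{R}_{g(t)} - \sigma)_{\_}$ along the Ricci flow and then integrate it against a suitable localizing weight. Recall the scalar curvature evolves by $\partial_t \mathrm{R} = \Delta \mathrm{R} + 2|\Ric|^2$, so $v := \mathrm{R}_{g(t)} - \sigma$ satisfies $(\partial_t - \Delta) v = 2|\Ric|^2 \geq \tfrac{2}{n} v^2$ (in fact $\geq \tfrac{2}{n}(v+\sigma)^2$, but $\tfrac{2}{n}v^2$ plus a controlled error from the cross terms involving $\sigma$ will suffice). Writing $u = \max\{-v,0\}$, on the open set $\{u > 0\}$ we have $(\partial_t - \Delta) u \leq -\tfrac{2}{n}u^2 + (\text{error terms in }\sigma)$ in the barrier/distributional sense, so in particular $(\partial_t - \Delta) u \leq C(n,\sigma) u$ after using $u^2 \geq 0$ and bounding the $\sigma$-linear error by $C u + C$ where the constant absorbs $\sigma^2$; more carefully one keeps a $u^{n/2}$-type gain. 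Then $w := u^{n/2}$ satisfies, by the chain rule for the heat operator applied to the convex increasing function $s \mapsto s^{n/2}$ on $s \geq 0$ (valid since $n \geq 3$ so the exponent is $\geq 3/2 > 1$), an inequality of the form $(\partial_t - \Delta) w \leq C(n,\sigma)\, w + C(n,\sigma)$, using the curvature bound $|\Rm| \leq A t^{-1}$ only to justify the parabolic regularity needed to make these manipulations rigorous for $t > 0$.

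Next I would localize. Fix $x_0$ and choose a cutoff: the cleanest route is to test the differential inequality for $w$ against the conjugate heat kernel $G(z,\tau; \cdot,\cdot)$ from Proposition~\ref{GreenFunctionEstimate}, or equivalently against a spatial cutoff $\phi$ supported in $B_{g_0}(x_0,1)$ and equal to $1$ on a slightly smaller ball, evolved appropriately. Using the reverse Bishop-Gromov-type volume control $\mathrm{Vol}_{g(t)}(B_{g(t)}(x,1)) \leq A$ together with $|\Rm| \leq A t^{-1}$, the hypotheses of Proposition~\ref{GreenFunctionEstimate} are met (the non-inflating volume bound plus two-sided curvature bound give the required non-collapsing after a short time, or this is assumed implicitly), so the Green function enjoys the Gaussian upper bound $G(x,t;y,s) \leq C t^{-n/2}\exp(-d_s^2(x,y)/Ct)$. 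Pairing $w(\cdot,t)$ against $G$ based at a point of $B_{g(t)}(x_0,\tfrac14)$ and integrating Duhamel's formula in time, the linear term $Cw$ contributes the exponential factor $e^{C\sqrt t}$ (after a Gronwall/iteration argument over the short time interval $[0,t]$, with $\sqrt t$ rather than $t$ appearing because of the $t^{-1/2}$-type weights one picks up from the heat kernel derivative estimates, or simply by keeping track that $\int_0^t s^{-1/2}ds = 2\sqrt t$), the constant term contributes $C\sqrt t$, and the Gaussian decay of $G$ confines the spatial integral to (an enlargement of) $B_{g_0}(x_0,1)$ — here the $C^0$-closeness lets one pass between $g_0$- and $g(t)$-balls, after shrinking $S$ so that $d_{g(t)}$ and $d_{g_0}$ are comparable on the relevant scale. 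This yields exactly the claimed estimate with the initial term $\int_{B_{g_0}(x_0,1)} u_0^{n/2}\,d\mu_0$.

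The main obstacle I anticipate is making the differential inequality for $w = u^{n/2}$ rigorous near the free boundary $\{u = 0\}$ and at $t = 0$: $u$ is only Lipschitz in general (it is a max of a smooth function and $0$), and $u^{n/2}$ is $C^1$ but not $C^2$ across $\{u=0\}$, so one must argue via smooth approximations $u_\epsilon := (\sqrt{v^2 + \epsilon^2} - v)/2$ or via the Kato-type inequality in the distributional sense, then pass to the limit; the parabolic smoothing from $|\Rm| \leq At^{-1}$ is what keeps everything finite for $t>0$, and the $C^2$ hypothesis on $g_0$ (equivalently the $C^2_{loc}$ convergence $g(t) \to g_0$ from Lemma~\ref{Lemma}(i), once we transplant to the Ricci-DeTurck picture and back) is what controls the $t \to 0$ limit of the left side and identifies the initial integral. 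A secondary technical point is handling the cross terms generated by the constant $\sigma$: since $2|\Ric|^2 \geq \tfrac{2}{n}\mathrm{R}^2 = \tfrac{2}{n}(v+\sigma)^2$, expanding gives $\tfrac{2}{n}v^2 + \tfrac{4\sigma}{n}v + \tfrac{2\sigma^2}{n}$, and on $\{u>0\}$ the middle term is $-\tfrac{4\sigma}{n}u$ which is absorbed into the $Cw$ term (after Young's inequality against the $u^2$ gain, or simply bounded since we only need an upper bound on the heat operator applied to $w$), while $\tfrac{2\sigma^2}{n}$ feeds into the additive $C\sqrt t$; none of this is deep but it must be tracked to get the stated constants $C(n,A,\sigma)$, $S(n,A,\sigma)$.
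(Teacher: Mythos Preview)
Your proposal has a genuine gap at the localization step. You intend to represent $w(x,t)$ via Duhamel against the kernel $G$ of Proposition~\ref{GreenFunctionEstimate} and then assert that ``the Gaussian decay of $G$ confines the spatial integral to (an enlargement of) $B_{g_0}(x_0,1)$''. That inference fails: after integrating the pointwise representation over $x\in B_{g(t)}(x_0,\tfrac14)$ and swapping the order, the outer integral becomes $\int_M w(y,0)\,[\,\cdots\,]\,d\mu_0(y)$, and the statement imposes \emph{no} control whatsoever on $w(y,0)=(\mathrm R_{g_0}(y)-\sigma)_-^{n/2}$ for $y\notin B_{g_0}(x_0,1)$. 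A Gaussian weight cannot tame an a~priori unbounded integrand. Your alternative (``a spatial cutoff $\phi$ \dots\ evolved appropriately'') is not a free substitute either: once you differentiate $\int w\,\Phi\,d\mu_t$ you produce the cross term $-2\langle\nabla w,\nabla\Phi\rangle$, and in passing from $(\partial_t-\Delta)u\le -\tfrac{2}{n}u^2+\cdots$ to $(\partial_t-\Delta)w\le Cw+C$ you have discarded precisely the good term $-\tfrac{n}{2}(\tfrac{n}{2}-1)u^{n/2-2}|\nabla u|^2$ needed to absorb it.

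There is a second structural point you miss. Along Ricci flow $\partial_t\,d\mu_t=-\mathrm R\,d\mu_t$, so $\frac{d}{dt}\int w\,\Phi\,d\mu_t$ acquires an extra $-\int \mathrm R\,w\,\Phi\,d\mu_t$, and on $\{w>0\}$ one only has $\mathrm R\ge -C_n A\,t^{-1}$; left unmatched this term blows up the estimate. The paper keeps the sharper inequality $(\partial_t-\Delta)\varphi^{n/2}\le (\mathrm R+\sigma)\varphi^{n/2}-\tfrac{n}{2}(\tfrac{n}{2}-1)\varphi^{n/2-2}|\nabla\varphi|^2$, and the $+\mathrm R$ here exactly cancels the $-\mathrm R$ from the measure. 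The cutoff is Perelman's, $\Phi=e^{-Ct}\phi^m\big(d_t(\cdot,x_0)+2C_0\sqrt t\big)$, built from the distance distortion estimate so that $(\partial_t-\Delta)\Phi\le 0$ and $|\nabla\Phi|^2\le C\Phi^{2-2/m}$. After Cauchy absorbs the cross term with the good gradient term, one is left with $E'(t)\le \sigma E+C\int\varphi^{n/2}\Phi^{1-2/m}\,d\mu_t$; only now is $|\mathrm{Rm}|\le At^{-1}$ invoked, to peel off $\varphi^{1/2}\le Ct^{-1/2}$ and close (via Jensen on the ball of bounded volume) to $E'(t)\le Ct^{-1/2}(E+1)$. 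This, not any heat-kernel derivative weight, is the origin of the $e^{C\sqrt t}$ and the additive $C\sqrt t$.
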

\begin{proof}

In the following, we will use $C_i$ to denote constants depending only on $n,A,\sigma$. We will assume $S\leq 1$ so that we always assume working with $t\leq 1$.

Since $|\Rm( g(t))|\leq At^{-1}$ for some $A>0$ on $M\times (0,1]$,  we apply \cite[Lemma 8.3]{Perelman2002} to deduce that the evolving distance function $d_t(x,x_0)$ satisfies 
\begin{equation}
\heat d_t(x,x_0) \geq -C_0t^{-1/2}
\end{equation}
in the sense of barrier  whenever $d_t(x,x_0)\geq \sqrt{t}$.

Let $\phi: [0, \infty) \rightarrow [0, 1]$ be a smooth non-increasing function so that $\phi\equiv 1$ on $[0,\frac12]$, vanishes outside $[0,1]$ and satisfies $\phi''\geq -10^4\phi, |\phi'|\leq 10^4$. Then we define 
\begin{equation}
\Phi(x,t)=e^{-10^4mt} \phi^m\left({d_t(x,x_0)+2C_0\sqrt{t}} \right)
\end{equation}
where $m$ is a large integer to be chosen later. In this way,  direct computation show that function $\Phi$ is a cutoff function satisfying
\begin{equation}\label{cutoff-estimate}
\heat \Phi \leq 0
\end{equation}
in the sense of barrier and hence in the distribution sense, see \cite[Appendix A]{CarloGiovanniGennady}. Moreover, we have 
\begin{equation}\label{cutoff-estimate-2}
|\nabla \Phi|\leq 10^4 m\Phi^{1-\frac1m}.
\end{equation}
We will use $\Phi$ to localize the estimate with suitable choice of $m$.
\bigskip

Recall that the scalar curvature $\mathrm{R}$ satisfies 
\begin{equation}
 \heat \mathrm{R} = 2|\Ric|^2\geq \frac2n \mathrm{R}^2.
\end{equation}

Hence for $\sigma\in \mathbb{R}$, whenever $\mathrm{R}<\sigma$,  the function $\varphi = (\mathrm{R} - \sigma)\_$ satisfies 
\begin{equation}
\begin{split}
    \heat  \varphi &= \heat (-\mathrm{R})\\
    &\leq -\frac2n \mathrm{R}^2\\
    &\leq \frac2n (\mathrm{R}+\sigma) \varphi .
    \end{split}
\end{equation}

In particular, 
\begin{equation}\label{equ-varphi-n/2}
\begin{split}
\heat \varphi^{n/2}&\leq (\mathrm{R}+\sigma) \varphi^{n/2}-\left(\frac{n}{2}\right)\left(\frac{n}{2}-1 \right)\varphi^{n/2-2} |\nabla \varphi|^2
\end{split}
\end{equation}
in the sense of barrier whenever $\varphi>0$.  Therefore, the inequality holds in the distribution sense by \cite[Appendix A]{CarloGiovanniGennady}.

With this in hand, we consider the energy $E(t)=\int_M \varphi^{n/2} \Phi \; d\mu_{g(t)}$. By differentiating $E(t)$ with respect to $t$, we obtain from \eqref{equ-varphi-n/2} and \eqref{cutoff-estimate} that
\begin{equation}
\begin{split}
\frac{d}{dt} E(t)&=\int_M  \heat \left(\varphi^{n/2} \cdot \Phi \right) -\mathrm{R}  \cdot \varphi^{n/2}\Phi  \; d\mu_t \\
&\leq \int_M -\left(\frac{n}{2}\right)\left(\frac{n}{2}-1 \right)\varphi^{n/2-2} |\nabla \varphi|^2\Phi\;d\mu_t \\
&\quad +\int_M \sigma\varphi^{n/2} \Phi +n\varphi^{\frac{n}2-1}|\nabla \varphi| |\nabla \Phi| \; d\mu_t.
\end{split}
\end{equation}

Since $n\geq 3$, we apply Cauchy inequality to obtain
$$ n\varphi^{\frac{n}2-1} |\nabla \varphi||\nabla\Phi|\leq \left(\frac{n}{2}\right)\left(\frac{n-2}{2} \right)\varphi^{\frac{n}2-2} |\nabla \varphi|^2\Phi+C_n\varphi^\frac{n}{2}\frac{|\nabla\Phi|^2}{\Phi}$$
and therefore
\begin{equation}
\begin{split}
\frac{d}{dt} E(t)&\leq \sigma E(t)+C_1 \int_M \varphi^{n/2} \frac{|\nabla \Phi|^2}{\Phi} \;d\mu_t\\
&\leq \sigma E(t)+10^8C_1 m^2 \int_M  \varphi^{n/2} \Phi^{1-\frac2m} \; d\mu_t.
\end{split}
\end{equation}
where we have used \eqref{cutoff-estimate-2} on the last inequality.    From now on, we fix $m=2n$.  Recall that $|\Rm|\leq At^{-1}$ and hence $\varphi\leq C_2t^{-1}$. Therefore, 
\begin{equation}
\begin{split}
\frac{d}{dt} E(t)&\leq \sigma E(t)+C_3  \sup |\varphi|^{1/2} \cdot \int_M  \varphi^{\frac{n-1}{2}} \Phi^{1-\frac1n} \; d\mu_t\\
&\leq \sigma E(t)+\frac{C_4}{\sqrt{t}} E(t)^{1-\frac1n}\\
&\leq \frac{C_5}{\sqrt{t}} \left( E(t)+1\right).
\end{split}
\end{equation}
The last inequality follows from Young's inequality,  the fact that $\{x: \Phi\neq 0\} \subset B_{g(t)}(x_0,1)$, the volume estimate together with the Jensen inequality.

By integrating over $t\in [0,T]$,  we obtain 
\begin{equation}
\begin{split}
E(t)\leq  e^{C_6\sqrt{t}}E(0)+ \left(e^{C_6\sqrt{t}}-1\right).
\end{split}
\end{equation}
 
Since from the construction,  we have
\begin{equation}
B_{g(t)}\left(x_0,\frac14\right) \subset \{ x\in M: \Phi \equiv 1\}.
\end{equation} 
 
 Hence,  by shrinking $S$ we conclude that 
 \begin{equation}
 \int_{B_{g(t)}\left(x_0,\frac14\right)} (\mathrm{R}_{g(t)}-\sigma)_{\_}^{\frac{n}{2}} \;d\mu_t \leq  e^{C_6\sqrt{t}}\cdot \int_{B_{g_0}\left(x_0,1\right)} (\mathrm{R}_{g_0}-\sigma)_{\_}^{\frac{n}{2}} \;d\mu_0+C_7 \sqrt{t}.
 \end{equation}
 
 This completes the proof.
\end{proof}

\bigskip

Now we are ready to prove Theorem~\ref{Main-Theorem-1}. 
\begin{proof}[Proof of Theorem~\ref{Main-Theorem-1}]
We will follow the same spirit as in the work of Bamler  \cite{Bamler2016} but replacing the estimates by our new localization.

Suppose on the contrary, there is $x_0\in M$ such that $\mathrm{R}_{g_0}(x_0)< \kappa(x_0)$.

By scaling, we will assume $\Phi:U\subset M \to \mathbb{R}^n$ to be a chart centred at $x_0$ so that
\begin{enumerate}
\item[(a)] The metric $\Phi_{*}g$ is (1+$\epsilon$)-bilipschitz to $g_{Eucl}$ where the constant $\epsilon$ is obtained from Lemma~\ref{Lemma}.
    \item[(b)] $\Phi(B_{g_0}(x_0, \frac12)) \subset B_{euc}(0, 1) \subset \Phi(B_{g_0}(x_0, 2)) \subset B_{euc}(0, 3) \subset \Phi(U)$.
\end{enumerate}

For notational convenience, we will simply treat $\Phi$ to be identity locally.

\bigskip

We let $\sigma>0$ be such that $\mathrm{R}_{g_0}(x_0)\leq \kappa(x_0)-2\sigma$. By the continuity of $\kappa$, for $0<\e<<\sigma$, there exists $r_0>0$ so that for  $\kappa_0=\kappa(x_0)$,
\begin{equation}\label{cont-kappa}
\mathrm{R}_{g_0}(x)\leq \kappa_0-\sigma<\kappa_0 -\e<\kappa(z)\leq \kappa_0+\e
\end{equation}
for all $x,z\in B_{euc}(0,r_0)$. We may assume $r_0$ to be smaller than $\frac14$.

\bigskip

We choose a cutoff function $\phi$ on $\mathbb{R}^n$ which is identically 1 on $B_{euc}(0, 3)$ and supported in $U$. Then the metric $\tilde g_0 := \phi\cdot   g_0 + (1-\phi)g_{Eucl}$ satisfies:
\begin{enumerate}
    \item[(i)] The metric $\tilde  g_0$ is $C^2$ and is ($1+\epsilon$)-bilipschitz to $g_{Eucl}$.
    \item[(ii)] $\tilde g_0$ coincides with $ g_0$ on $B_{euc}(0,3)$.
\end{enumerate}

\bigskip

We also construct the perturbed approximating sequence $$\tilde g_{i,0}=\phi  g_i+(1-\phi) g_{euc}$$
on $\mathbb{R}^n$ so that $\tilde g_{i,0} \to \tilde g_0$ in $C^0(\mathbb{R}^n)$. By Lemma~\ref{Lemma}, for each $i\in \mathbb{N}$ there is a Ricci-Deturck flow $\tilde g_i(t),t\in [0,1]$ with $\tilde g_i(0)=\tilde g_{i,0}$ and 
\begin{equation}\label{RDF-estimate}
|D \tilde g_i(t)|^2 +|D^2 \tilde g_i(t)|\leq At^{-1}
\end{equation}
for some $A$ independent of $i$. 

Moreover, there is a Ricci-Deturck flow $\tilde g(t)$ starting from $\tilde g_0$ so that $\tilde g(t)$ is $C^2$ up to $t=0$ and $\tilde g_i(t)\to \tilde g(t)$ as $i\to +\infty$ in $C^\infty_{loc}$ for $t>0$ after passing to subsequence.  Thanks to the uniqueness, it suffices to control the scalar curvature of $\tilde g_i(t)$ uniformly.
\bigskip

For notational convenience, we will omit the index $i$ for the moment.  Recall that from the discussion in Section~\ref{Pre-RF}, $\hat g(t)=\Phi_t^* \tilde g(t)$ is a solution to the Ricci flow with $\hat g(0)=\tilde g(0)$ where 
\begin{equation}
\left\{
\begin{array}{ll}
\partial_t \Phi_t(x)=-W\left(\Phi_t(x),t \right);\\
\Phi_0(x)=x.
\end{array}
\right.
\end{equation}

Moreover,  since the Ricci-Deturck flow $\tilde g(t)$ satisfies $|D\tilde g(t)|^2+|D^2\tilde g(t)|\leq At^{-1}$ by passing \eqref{RDF-estimate} to limit, we see that $|\Rm(\tilde g(t))|\leq C_nAt^{-1}$ by comparing $\tilde g(t)$ with the Euclidean flat metric.  It also stays uniform equivalent to the Euclidean space.  Since the Ricci flow is diffeomorphic to the Ricci-Deturck flow,  $\hat g(t)$ satisfies the assumptions in Proposition~\ref{Main-Estimate}.

Noted that since $\tilde g(0)$ is $C^2$,  the Ricci flow solution is also $C^2$ up to $t=0$.  By applying Proposition~\ref{Main-Estimate} with scaling,  there is $S$ so that for $t\in [0,Sr_0^2]$,
 \begin{equation}\label{Equ-esti-RF-1}
 \begin{split}
&\quad \int_{B_{\hat g(t)}\left(0,\frac{r_0}4\right)} (\mathrm{R}_{\hat g(t)}-\kappa_0)_{\_}^{\frac{n}{2}} \;d\hat \mu_t \\
&\leq  e^{C_6r_0^{-1}\sqrt{t}}\cdot \int_{B_{euc}\left(0,2r_0\right)} (\mathrm{R}_{g_0}-\kappa_0)_{\_}^{\frac{n}{2}} \;d\mu_{euc}+C_7 r_0^{-1}\sqrt{t}
 \end{split}
 \end{equation}
 where we have used the fact that $\hat g(0)=\tilde g_0$ and the Ricci-Detruck flows are bi-lipschitz equivalent with Euclidean metric.

 Since the Ricci flow $\hat g(t)$ is diffeomorphic to the Ricci-Deturck flow $\tilde g(t)$ via $\Phi_t$, we might rewrite 
\begin{equation}
\begin{split}
 \int_{B_{\hat g(t)}\left(0,\frac{r_0}4\right)} (\mathrm{R}_{\hat g(t)}-\kappa_0)_{\_}^{\frac{n}{2}} \;d\hat \mu_t &= \int_{\Phi_t\left(B_{\hat g(t)}\left(0,\frac{r_0}4\right)\right)} (\mathrm{R}_{\tilde  g(t)}-\kappa_0)_{\_}^{\frac{n}{2}} \;d\tilde \mu_t .
\end{split}
\end{equation}
\bigskip

We now compare the integrating domain with geodesic balls. 
\begin{claim}
There exists $S>0$ independent of $i$ such that for all $t\in [0,Sr_0^2]$, 
$B_{\tilde g_0}\left(0,\frac{r_0}8 \right)\subset B_{\tilde g(t)}\left(\Phi_t(0),\frac{r_0}4 \right)=\Phi_t\left(B_{\hat g(t)}\left(0,\frac{r_0}4\right)\right)$ 
\end{claim}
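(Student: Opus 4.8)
The identity on the right is formal: since $\hat g(t)=\Phi_t^*\tilde g(t)$, the diffeomorphism $\Phi_t$ of $\mathbb{R}^n$ is a Riemannian isometry from $(\mathbb{R}^n,\hat g(t))$ to $(\mathbb{R}^n,\tilde g(t))$, hence it carries the metric ball $B_{\hat g(t)}(0,\tfrac{r_0}{4})$ onto the metric ball of the same radius about $\Phi_t(0)$, i.e. onto $B_{\tilde g(t)}(\Phi_t(0),\tfrac{r_0}{4})$. So the substance of the claim is the inclusion $B_{\tilde g_0}(0,\tfrac{r_0}{8})\subset B_{\tilde g(t)}(\Phi_t(0),\tfrac{r_0}{4})$, and the plan is to exploit the factor-of-two gap between the two radii.

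For $x\in B_{\tilde g_0}(0,\tfrac{r_0}{8})$ I would split
\begin{equation*}
d_{\tilde g(t)}(\Phi_t(0),x)\le d_{\tilde g(t)}(\Phi_t(0),0)+d_{\tilde g(t)}(0,x)
\end{equation*}
and bound the two terms separately. For the first, the DeTurck field obeys $|W(\cdot,t)|_{g_{euc}}\le C(n,A)\,t^{-1/2}$ — using only that $\tilde g(t)$ is uniformly bilipschitz to $g_{euc}$ (Lemma~\ref{Lemma}(ii)), that $|D\tilde g(t)|\le A^{1/2}t^{-1/2}$ (Lemma~\ref{Lemma}(iii)), and $\Gamma(g_{euc})=0$ — so it is integrable at $t=0$; integrating $\partial_s\Phi_s(0)=-W(\Phi_s(0),s)$ over $[0,t]$ gives $|\Phi_t(0)|_{g_{euc}}\le C(n,A)\sqrt t$, whence $d_{\tilde g(t)}(0,\Phi_t(0))\le C_8\sqrt t$ by the bilipschitz bound on $\tilde g(t)$. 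For the second term, the minimizing $\tilde g_0$-geodesic from $0$ to $x$ has Euclidean length $<1.1\cdot\tfrac{r_0}{8}<1$ (here $r_0<\tfrac14$), so it stays inside $B_{euc}(0,1)\subset U$; on that set both $\tilde g_0$ and $\tilde g(t)$ are within a factor close to $1$ of $g_{euc}$, so the $\tilde g(t)$-length of this curve is at most $\Lambda_0$ times its $\tilde g_0$-length with $\Lambda_0<2$, giving $d_{\tilde g(t)}(0,x)<\Lambda_0\,\tfrac{r_0}{8}$.

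Combining, $d_{\tilde g(t)}(\Phi_t(0),x)<C_8\sqrt t+\Lambda_0\,\tfrac{r_0}{8}$, which is $<\tfrac{r_0}{4}$ as soon as $C_8\sqrt t<(2-\Lambda_0)\tfrac{r_0}{8}$; hence any $S\le\big((2-\Lambda_0)/(8C_8)\big)^2$ works, and since $C_8$ and $\Lambda_0$ depend only on $n$, $A$ and the (universal) bilipschitz constants supplied by Lemma~\ref{Lemma}, the threshold $S$ is independent of $i$. Note the estimate holds at $t=0$ as well, where the first term drops out and only $\Lambda_0<2$ is used.

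There is no deep obstacle here — the argument is a soft distance comparison — and the only points requiring attention are: (a) checking that the $\tilde g_0$-geodesics in question and the curve $s\mapsto\Phi_s(0)$ remain inside the chart $U$, which is automatic for $t$ small because $r_0<\tfrac14$ and the basepoint moves only $O(\sqrt t)$; and (b) verifying that the composite length distortion $\Lambda_0$ of $\tilde g(t)$ against $\tilde g_0$ is genuinely below $2$, which is forced by Lemma~\ref{Lemma}(ii) and the construction of $\tilde g_0$, both of which are $(1+o(1))$-bilipschitz to $g_{euc}$. If a sharper distortion bound were ever needed one could replace $\Lambda_0<2$ by $\Lambda_0=1+o(1)$ using the $C^2_{loc}$ convergence $\tilde g(t)\to\tilde g_0$ as $t\to0$ together with the uniform-in-$i$ convergence of Lemma~\ref{Lemma}(iv), but the crude bound already suffices for the claim.
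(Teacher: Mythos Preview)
Your proof is correct and follows essentially the same route as the paper's: split $d_{\tilde g(t)}(\Phi_t(0),x)$ by the triangle inequality, bound $d_{\tilde g(t)}(\Phi_t(0),0)$ by integrating $|W|\le C t^{-1/2}$ over $[0,t]$, and bound $d_{\tilde g(t)}(0,x)$ via the uniform bilipschitz equivalence of $\tilde g(t)$ and $\tilde g_0$ to $g_{euc}$. The paper uses the explicit constants $1.1$ and $1.2$ where you write $\Lambda_0<2$, and skips your remarks about $\Phi_t$ being an isometry and about where the geodesics lie (the bilipschitz bounds hold on all of $\mathbb{R}^n$, so that care is not actually needed), but the argument is the same.
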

\begin{proof}[Proof of Claim]
Since $\tilde g(t)$ is $1.1$-biLipschitz to $g_{euc}$, it is $1.2$-biLipschitz to $\tilde g_0$. Thus, if $d_{\tilde g_0}(x,0)<\frac{r_0}8$, 
\begin{equation}
\begin{split}
d_{\tilde g(t)}(\Phi_t(0),x)&\leq d_{\tilde g(t)}(\Phi_t(0),x_0)+d_{\tilde g(t)}(0,x)\\
&\leq (1.1) \cdot d_{euc}(\Phi_t(0),0)+(1.2) \cdot  d_{\tilde g_0}(0,x)\\
&\leq C_n\sqrt{t} +\frac3{20} r_0 
\end{split}
\end{equation}
where we have used the $|\partial_t \Phi|=|W|\leq At^{-1/2}$ from Lemma~\ref{Lemma}. This proved the claim by shrinking $S$ if necessary.
\end{proof}

Using the claim, \eqref{Equ-esti-RF-1} and Lemma~\ref{Lemma}, for all $i\to+\infty$, we have  for all $t\in (0,S]$,
\begin{equation}\label{estimate-limit}
\begin{split}
&\quad \int_{B_{ \tilde g_{i,0}}\left(0,\frac{r_0}8 \right)}  (\mathrm{R}_{\tilde g_i(t)}-\kappa_0)_{\_}^{\frac{n}{2}} \;d\tilde  \mu_{t} \\
&\leq  e^{C_6r_0^{-1}\sqrt{t}}\cdot \int_{B_{ euc}\left(0,2r_0\right)} (\mathrm{R}_{\tilde g_{i,0}}-\kappa_0)_{\_}^{\frac{n}{2}} \;d\mu_{euc}+C_7 r_0^{-1}\sqrt{t}.
\end{split}
\end{equation}

On the other hand, using the assumption and \eqref{cont-kappa}, we have for $i$ sufficiently large,
\begin{equation}
\begin{split}
&\quad \left[  \int_{B_{ euc}\left(0,2r_0\right)} (\mathrm{R}_{\tilde g_{i,0}}-\kappa_0)_{\_}^{\frac{n}{2}} \;d\mu_{euc}\right]^{2/n}\\
&\leq  \left[  \int_{B_{ euc}\left(0,2r_0\right)} (\mathrm{R}_{\tilde g_{i,0}}-\kappa)_{\_}^{\frac{n}{2}} \;d\mu_{euc}\right]^{2/n}+ \left[  \int_{B_{ euc}\left(0,2r_0\right)} |\kappa-\kappa_0|^{\frac{n}{2}} \;d\mu_{euc}\right]^{2/n}\\
&<C_n\e r_0^{2}.
\end{split}
\end{equation}

By letting $i\to+\infty$ on \eqref{estimate-limit} and followed by letting $t\to 0$, we conclude that 
\begin{equation}
\int_{B_{ \tilde g_{0}}\left(0,\frac{r_0}8 \right)}  (\mathrm{R}_{\tilde g_0}-\kappa_0)_{\_}^{\frac{n}{2}} \;d\tilde  \mu_{0} <C_n\e^{n/2} \cdot r_0^n.
\end{equation}

Since $\tilde g_0$ coincides with $g_0$ on $B_{ \tilde g_{0}}\left(0,\frac{r_0}8 \right)$, the left hand side can be estimated using \eqref{cont-kappa} to obtain 
\begin{equation}
\int_{B_{ \tilde g_{0}}\left(0,\frac{r_0}8 \right)}  (\mathrm{R}_{\tilde g_0}-\kappa_0)_{\_}^{\frac{n}{2}} \;d\tilde  \mu_{0}\geq  C_n^{-1}\sigma^{n/2} r_0^n
\end{equation}
which is impossible if $\e$ is chosen to be small relative to $\sigma$. This completes the proof.
\end{proof}

\section{$L^1$-Estimate}

In this section, we will consider the case when the scalar curvature's lower bound converges to some function in some weighed $L^1_{loc}$ sense.

We will start with the curvature estimate under \textit{Ricci flows}.
\begin{prop}\label{Improved-R-low}
Suppose $(M,g(t)),t\in [0,S]$ is a compact Ricci flow such that 
$$|\Rm(x,t)|\leq At^{-1},\quad\text{and}\quad \mathrm{Vol}_{g(t)} \left(B_{g(t)}(x,\sqrt{t}) \right)\geq A^{-1} t^{n/2}$$
for all $(x,t)\in M\times [0,S]$. If there are $\sigma\in \mathbb{R},\delta>0,\e\in (0,1)$ so that 
  $$  \sup_{\substack{r\in (0,D] \\ x \in M}}  r^{(2-n-2\delta)}\int_{B_{g_0}(x, r)} \left(\mathrm{R}(g_0)-\sigma \right)_{\_} d\mu_{g_0} < \epsilon,$$
  where $D=\mathrm{diam}(M,g_0)$,  then there is $\hat S(n,A,\sigma,\delta),L(n,A,\sigma,\delta)>0$ such that for all $t\in (0,S]\cap (0,\tilde S]$, 
$$ \left(\mathrm{R}(g(t))-\sigma \right)_{\_}\leq L \e t^{-1+\delta}.$$
\end{prop}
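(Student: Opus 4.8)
The plan is to estimate the negative part of scalar curvature along the Ricci flow by representing it through the heat kernel of the backwards conjugate heat equation. Since $\heat \mathrm{R} = 2|\Ric|^2 \geq \tfrac{2}{n}\mathrm{R}^2 \geq 0$, the function $\varphi = (\mathrm{R}-\sigma)_{\_}$ satisfies $\heat \varphi \leq \tfrac{2}{n}(\mathrm{R}+\sigma)\varphi$ in the barrier (hence distributional) sense, exactly as in the proof of Proposition~\ref{Main-Estimate}. Using the uniform bound $|\mathrm{R}| \leq C_n A t^{-1}$ we can absorb the zeroth-order term: after multiplying by a suitable integrating factor, $u(x,t) := \exp(-\int \text{something})\,\varphi(x,t)$ is a distributional subsolution of the heat equation $\heat u \leq 0$. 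Then for a fixed point $(x,t)$, by the reproduction formula for the conjugate heat kernel $G(x,t;\cdot,\cdot)$ we get, for any $0 < s < t$,
\begin{equation*}
\varphi(x,t) \leq C(n,A)\int_M G(x,t;y,s)\,\varphi(y,s)\,d\mu_{g(s)}(y),
\end{equation*}
and we would like to send $s \to 0^+$, where $\varphi(y,s) \to (\mathrm{R}(g_0)(y)-\sigma)_{\_}$ and $d\mu_{g(s)} \to d\mu_{g_0}$ in $C^0$ up to $t=0$ (using that the flow is $C^2$ up to time $0$, or a limiting argument via Lemma~\ref{Lemma}(iv) if we first work with the approximating smooth flows).

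The heart of the matter is then a weighted integral estimate. Proposition~\ref{GreenFunctionEstimate} gives the Gaussian upper bound $G(x,t;y,s) \leq C t^{-n/2}\exp(-d_s^2(x,y)/(Ct))$ under exactly the hypotheses we have assumed (the curvature bound and the noncollapsing volume bound are in the statement). So the task reduces to controlling
\begin{equation*}
t^{-n/2}\int_M \exp\!\left(-\frac{d_0^2(x,y)}{Ct}\right)\left(\mathrm{R}(g_0)(y)-\sigma\right)_{\_} d\mu_{g_0}(y)
\end{equation*}
by the hypothesis $\sup_{r,x} r^{2-n-2\delta}\int_{B_{g_0}(x,r)}(\mathrm{R}(g_0)-\sigma)_{\_}\,d\mu_{g_0} < \epsilon$. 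The standard way to do this is a dyadic decomposition of $M$ into annuli $A_k = \{y : 2^k\sqrt{t} \leq d_0(x,y) < 2^{k+1}\sqrt{t}\}$ for $k \geq 0$ together with the ball $\{d_0(x,y) < \sqrt{t}\}$; on $A_k$ the Gaussian weight is at most $\exp(-4^k/C)$, and the hypothesis bounds $\int_{A_k}(\mathrm{R}(g_0)-\sigma)_{\_}\,d\mu_{g_0} \leq \int_{B_{g_0}(x,2^{k+1}\sqrt t)}(\cdots) \leq \epsilon\,(2^{k+1}\sqrt t)^{n-2+2\delta}$. Summing,
\begin{equation*}
t^{-n/2}\sum_{k\geq 0} e^{-4^k/C}\,\epsilon\,(2^{k+1}\sqrt t)^{n-2+2\delta} \leq C\epsilon\, t^{-n/2}\cdot t^{(n-2+2\delta)/2}\sum_{k\geq 0} e^{-4^k/C} 2^{(k+1)(n-2+2\delta)} = C'\epsilon\, t^{-1+\delta},
\end{equation*}
since the series converges; the small-$k$ (central ball) term is handled the same way with $r=\sqrt t$ and contributes $\lesssim \epsilon t^{-1+\delta}$ as well. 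This produces $\varphi(x,t) \leq L\epsilon\, t^{-1+\delta}$ with $L = L(n,A,\sigma,\delta)$, which is the assertion, valid for $t$ smaller than some $\tilde S$ depending on the same data (the restriction on $t$ comes from needing $2^{k+1}\sqrt t \leq D = \diam(M,g_0)$ in the lowest annuli, i.e. from the fact that the hypothesis only controls radii $r \leq D$; annuli with larger radius are either empty or can be absorbed since $M$ is compact and $\varphi$ is globally bounded by $C_n A t^{-1}$, which is itself $\leq L\epsilon t^{-1+\delta}$ once $t$ is small).

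The main obstacle I anticipate is the interchange of limits and the justification of the representation formula at the level of regularity available: $\varphi = (\mathrm{R}-\sigma)_{\_}$ is only Lipschitz in $\mathrm{R}$, the differential inequality holds only distributionally, and one must pass $s\to 0^+$ where the metrics are only $C^2$ (and, in the intended application, one first has the approximating flows $\tilde g_i(t)$ which are merely smooth for $t>0$ with $C^0$ convergence at $t=0$). I would handle this by working with the smooth flows $\tilde g_i(t)$ on $t \in (0,S]$ first — where everything is smooth and the maximum-principle/heat-kernel argument is rigorous, with the conjugate heat kernel estimate of Proposition~\ref{GreenFunctionEstimate} applied to each $\tilde g_i(t)$ — obtaining the bound with the initial integral taken over $\tilde g_i(0) = \tilde g_{i,0}$, then using the $L^1$-type convergence of $(\mathrm{R}(\tilde g_{i,0})-\sigma)_{\_}$ controlled by the hypothesis (after the $C^0$-bilipschitz comparison of measures and balls, exactly as in the proof of Theorem~\ref{Main-Theorem-1}) and finally Lemma~\ref{Lemma}(iv) to pass $i\to\infty$ and identify the limit flow with the one starting from $g_0$; letting $t\to 0$ then recovers the pointwise bound on $g_0$ itself, which feeds into the proof of Theorem~\ref{Main-Theorem-2}.
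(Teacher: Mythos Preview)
Your annular decomposition of the Gaussian integral against the weighted $L^1$ hypothesis is exactly right and coincides with the paper's computation. The gap is earlier, in how you pass from the differential inequality for $\varphi$ to a heat-kernel representation compatible with Proposition~\ref{GreenFunctionEstimate}.

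The kernel $G$ in Proposition~\ref{GreenFunctionEstimate} is the fundamental solution of $\partial_t - \Delta - \mathrm{R}$, so a representation $\tilde\varphi(x,t) \leq \int_M G(x,t;y,0)\,\tilde\varphi(y,0)\,d\mu_0$ requires $(\partial_t - \Delta - \mathrm{R})\tilde\varphi \leq 0$, not merely $\heat\tilde\varphi \leq 0$. Writing $\frac{2}{n}(\mathrm{R}+\sigma)\varphi = \mathrm{R}\varphi + \bigl(\frac{2-n}{n}\mathrm{R}+\frac{2}{n}\sigma\bigr)\varphi$, the residual coefficient $\frac{2-n}{n}\mathrm{R}+\frac{2}{n}\sigma$ must be absorbed into the integrating factor. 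But the only a priori control on $-\mathrm{R}$ is the crude $|\mathrm{R}|\leq C_nAt^{-1}$ you invoke, and $\int_s^t \tau^{-1}\,d\tau$ is \emph{not} integrable at $s=0$: your factor behaves like $(t/s)^{C}$ and blows up as $s\to 0^+$, so the constant $C(n,A)$ you write in front of the integral does not exist. If instead you absorb only $\frac{2}{n}(\mathrm{R}+\sigma)\leq \frac{4|\sigma|}{n}$ (valid on $\{\varphi>0\}$) to get a bona fide subsolution of $\partial_t-\Delta$, the relevant kernel is no longer $G$ and Proposition~\ref{GreenFunctionEstimate} does not apply to it.

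The paper breaks this circularity with a continuity (bootstrap) argument. One lets $T$ be the first time at which $\varphi = \Lambda t^{-\alpha}$ for parameters $\alpha\in(0,1)$, $\Lambda>0$ to be chosen. On $(0,T]$ the assumed bound gives $\frac{2-n}{n}\mathrm{R}+\frac{2}{n}\sigma \leq \Lambda t^{-\alpha}+|\sigma|$, which \emph{is} integrable, so $\tilde\varphi = e^{-|\sigma|t-\frac{\Lambda}{1-\alpha}t^{1-\alpha}}\varphi$ satisfies $(\partial_t-\Delta-\mathrm{R})\tilde\varphi \leq 0$ with a bounded factor. Now your annular calculation applies verbatim and gives, at the touching point $(x_0,T)$,
\[
\Lambda T^{-\alpha}\,e^{-|\sigma|T-\frac{\Lambda}{1-\alpha}T^{1-\alpha}}\;\leq\; C\epsilon\,T^{-1+\delta}.
\]
Choosing $\alpha=1-\delta$ and $\Lambda = eC\epsilon$ forces $T$ to be bounded below by a constant depending only on $n,A,\sigma,\delta$, which closes the bootstrap and yields the claimed estimate directly for the given flow; no approximation by auxiliary flows or passage $s\to 0^+$ through a limiting argument is needed within the proposition itself.
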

\begin{proof}
In what follows, we will use $C_i$ to denote constants depending only on $n,\sigma,\delta,A$. We may assume $\hat S\leq 1$ so that we always work with $t\leq 1$.

Recall that the function $\varphi=(\mathrm{R}-\sigma)_-$ satisfies 
\begin{equation}\label{evo-varphi-L1}
\begin{split}
\heat \varphi&\leq \frac2{n}(\mathrm{R}+\sigma) \varphi\\
&= \mathrm{R}\varphi +\left(\frac{2-n}{n}\mathrm{R}+\frac2n\sigma \right)\varphi.
\end{split}
\end{equation}

Let $\Lambda>0$ and $\a\in (0,1)$ be two numbers to be chosen. We let $T\in [0,S]$ be the maximal time such that for all $t\in (0,T)$, we have 
\begin{equation}
\varphi < \Lambda t^{-\a}
\end{equation}
and $\varphi(x_0,T)=\Lambda T^{-\a}$ for some $x_0\in M$.

Since $M$ is compact and the Ricci flow is smooth, we must have $T>0$. Our goal is to estimate $T$ from below if we choose $\Lambda,\a$ suitably.  Since $\varphi\leq \Lambda t^{-\a}$ on $M\times (0,T]$,   we deduce that 
\begin{equation}
\begin{split}
\frac{2-n}{n}\mathrm{R} +\frac2n \sigma&=\frac{2-n}{n}(\mathrm{R}-\sigma)+\frac{4-n}{n} \sigma\\
&\leq \varphi +\frac{4-n}{n} \sigma\\
&\leq \Lambda t^{-\a}+| \sigma|.
\end{split}
\end{equation}

Together with \eqref{evo-varphi-L1},   we obtain
\begin{equation}
\begin{split}
\heat  \tilde\varphi\leq \mathrm{R} \tilde\varphi
\end{split}
\end{equation}
in the sense of barrier where $\tilde \varphi=e^{-|\sigma|t-\frac{\Lambda}{1-\a} t^{1-\a}} \varphi $.

%By \eqref{evo-varphi-L1}, the function $\tilde\varphi=\varphi \cdot \exp\left(-2C_0 (1-\a)^{-1}\Lambda t^{1-\a} \right)$ satisfies 
%\begin{equation}
%\begin{split}
%\heat \tilde\varphi&\leq 
%\end{split}
%\end{equation}

Hence, maximum principle implies 
\begin{equation}
\tilde\varphi(x,t)\leq \int_M G(x,t;y,0) \cdot \tilde\varphi(y,0) \; d\mu_0
\end{equation}
where $G(x,t;y,s)$ is the Green function of $\partial_t-\Delta_{g(t)} -\mathrm{R}$ associated to the Ricci flow $g(t)$, see Section~\ref{Pre-RF}. 

Therefore at $(x_0,T)$, we compute using Proposition~\ref{GreenFunctionEstimate} to yield
\begin{equation}
\begin{split}
&\quad \Lambda T^{-\a}\cdot \left(e^{-|\sigma|T-\frac{\Lambda}{1-\a} T^{1-\a}}  \right)\\
&= \tilde\varphi(x_0,T)\\
&\leq \int_M G(x_0,T;y,0) \cdot \tilde\varphi(y,0) \; d\mu_0\\
&\leq  \frac{C_1}{T^{n/2}} \sum_{k=0}^{\infty}\int_{B_{g_0}(x_0,(k+1)\sqrt{T})\setminus B_{g_0}(x_0,k\sqrt{T})} \varphi(y,0) exp(-\frac{d_0^2(x_0, y)}{C_1T}) d\mu_0 \\
&\leq  \frac{C_1}{T^{n/2}} \sum_{k=0}^\infty e^{-\frac{k^2}{C_1}} \int_{B_{g_0}(x_0,(k+1)\sqrt{T})} \varphi(y,0) \; d\mu_0\\
&\leq  \frac{C_1\e }{T^{1-\delta}} \sum_{k=0}^\infty e^{-\frac{k^2}{C_1}}(k+1)^{n-2+2\delta} \\
&\leq \frac{C_2\e}{T^{1-\delta}}.
\end{split}
\end{equation}

By choosing $\a=1-\delta$ and $\Lambda=e\cdot C_2\e$, we see that 
$$ 1\leq  |\sigma|T+\frac{e C_2\e}{1-\a} T^{1-\a} \leq C_3 T^{1-\a}$$
since we have assumed $T\leq 1$ and $0<\e<1$. In particular,  $1\geq T\geq C_4^{-1}$ for some $C_4>0$. In other word, we have shown that for all $t\in [0, C_4^{-1}]\cap [0,S]$,  
\begin{equation}
(\mathrm{R}(g(t))-\sigma)_- \leq  C_5 \e t^{\delta-1}.
\end{equation}

This completes the proof.
\end{proof}

Now we are ready to prove Theorem~\ref{Main-Theorem-2}.
\begin{proof}[Proof of Theorem~\ref{Main-Theorem-2}]
The proof is a slight modification of the proof of Theorem \ref{Main-Theorem-1} but we will use a recent result of Burkhardt-Guim \cite{Burkhardt2019} to simplify some argument.  By the main result of Simon \cite{Simon2002}, we can construct a Ricci Deturck flow $g(t),t\in [0,S]$ with $g(0)=g_0$ by defining $g(t)=\lim_{i\to +\infty} g_i(t)$ where $g_i(t)$ is a Ricci-Deturck flow with initial metric $g_i(t)=g_{i,0}$ using some fixed background metric $h$. The convergence is $C^0(M\times [0,S])\cap C^\infty_{loc}(M\times (0,S))$.  Moreover, both $g_i(t)$  and $g(t)$ is bi-Lipschitz to some fixed metric on $M$ and satisfy $|\Rm|\leq At^{-1}$ for some $A>0$.

Since $g_i(t)$ is diffeomorphic to the Ricci flow and is uniformly bi-Lipschitz, we can apply Proposition~\ref{Improved-R-low} to the Ricci flow to deduce that (after pulling back) for all $t\in (0,S]\cap (0,\tilde S]$, 
\begin{equation}
 \left(\mathrm{R}(g_i(t))-\sigma \right)_{\_}\leq L t^{-1+\delta} \cdot \e_i
\end{equation}

By letting $i\to +\infty$ on $(0,S]\cap (0,\tilde S]$, we conclude that the limiting Ricci-Deturck flow satisfies $\mathrm{R}(g(t)) \geq \sigma$ on $(0,S]\cap (0,\tilde S]$.  By \cite[Theorem 5.4]{Burkhardt2019} (more precisely its proof),  $g(t)$ is diffeomorphic to the $C^2$ Ricci flow $h(t)$ starting from $g_0$.  Hence, $\mathrm{R}(g_0)\geq \sigma$ on $M$ by letting $t\to 0$ on $h(t)$.
\end{proof}

\end{document}